\documentclass{sig-alternate-per}
\usepackage{graphicx} 

\title{Low-Rank KKT Updates and a Parallel Flipping Mechanism for Model-Based Derivative-Free Optimization}
\author{Donghan Wu\orcidlink{0009-0005-8095-7949}\thanks{University of California, Berkeley.}, Pengcheng Xie\orcidlink{0000-0001-5973-1535}\thanks{Lawrence Berkeley National Laboratory. Corresponding author  (pxie@lbl.gov).}}
\date{April 2026}

\usepackage{amsmath,amssymb,amsfonts}
\usepackage{graphicx}
\usepackage{color}
\usepackage{booktabs}
\usepackage{algorithm}
\usepackage{algpseudocode}
\usepackage{bm}
\usepackage{tikz}
\usetikzlibrary{shapes.geometric, arrows.meta, positioning, fit, backgrounds, shadows}
\newtheorem{theorem}{Theorem}

\newcommand{\algo}[1]{\textsf{#1}}

\usepackage{orcidlink}

\begin{document}

\maketitle



{

\section{Introduction}
\label{Introduction}

\vspace{0.185cm}
In many real-world settings, optimization objective functions can be expensive to evaluate and do not have readily available derivatives. These problems arise when objective functions are expressed not as an algebraic function, but by querying a ``black box'' (or ``zeroth-order oracle''). Black boxes include data from the realization of a chemical process or output from a computer simulation and are typically addressed by derivative-free optimization algorithms \cite{LMW2019AN}. In this paper, we address such unconstrained problems of the form $\min_{\boldsymbol{x}\in\mathbb{R}^n} f(\boldsymbol{x})$, where the objective function $f$ is a deterministic black box possessing some degree of smoothness. 
Derivative-free algorithms for such problems are reviewed in \cite{AudetHare2017}. The model-based methods discussed in this paper typically use a trust-region framework for selecting new iteration points \cite{MJDP06, Rinaldi2024}. One example of such models is polynomials, particularly underdetermined quadratic interpolation \cite{Ragonneau2024, xieyuannew, xie2023dfoto,xie2023twodimensional,xie2023linesearch,XIE2025116146}, which balances model richness and evaluation cost. 
When the problem dimension $n$ is large, constructing a full quadratic model requires $\frac{1}{2}(n+1)(n+2)$ degrees of freedom, which is often unaffordable. Therefore, underdetermined quadratic interpolation models using significantly fewer points have become a major research direction. In such cases, one must impose additional principles to uniquely select a model. A classical and influential choice is the least Frobenius norm updating model \cite{MJDP06}, which selects the new quadratic model by minimizing the change of the Hessian in Frobenius norm. This idea underlies successful solvers such as NEWUOA. More recently, alternative regularization principles have been proposed, including least $H^2$ norm updating models \cite{10.1093/imanum/drae106}, as well as trust-region-aware underdetermined models \cite{xieyuannew} and regional minimal updating ideas \cite{xie2025remuregionalminimalupdating}.

However, several core bottlenecks remain. First, after each interpolation-set modification, many methods reconstruct or refactorize the associated linear system from scratch. This can become prohibitively expensive when updates are frequent. Second, large-scale applications increasingly require parallel computing environments, yet model construction itself is often sequential and becomes the new bottleneck even when function evaluations are parallelized. Third, maintaining model quality while aggressively updating interpolation points is delicate. Geometric quality, poisedness, and robustness against outliers remain important issues. Fourth, modern applications demand broader adaptability. Emerging settings include model-driven subspace strategies for large-scale problems \cite{he2025modeldrivensubspaceslargescaleoptimization}. These trends call for scalable linear algebraic mechanisms inside DFO algorithms.

In this paper, we develop a new structured and parallelizable framework for model construction in derivative-free trust-region methods, centered around the least Frobenius norm updating model. 
\section{Least  Norm Model}

\vspace{0.185cm}

In the model-based derivative-free trust-region method, we use a quadratic interpolation model $Q_k$ to locally approximate the objective function $f$ over an interpolation set $\mathcal{X}_k=\{\boldsymbol{x}_1,\ldots,\boldsymbol{x}_m\}$, where $n+2 \le m \le \frac{1}{2}(n+1)(n+2)$.

\subsection{Model Formulation}

\vspace{0.185cm}

When $m$ is less than the degrees of freedom of a full quadratic model, the interpolation system is underdetermined. To uniquely determine $Q_k$, Powell proposed minimizing the Frobenius norm of the change in the Hessian matrix:
\begin{equation}\label{leastfrob}
\underset{Q}{\operatorname{\min}}\ \left\Vert\nabla^{2} Q-\nabla^{2} Q_{k-1}\right\Vert_{F}^{2} \quad \text{s.t.} \ \ Q(\boldsymbol{x}_i)=f(\boldsymbol{x}_i), \ \boldsymbol{x}_i \in \mathcal{X}_{k}.
\end{equation}
Let the index ``old'' denote the $k$-th step, and ``new'' denote the $(k+1)$-th step. We define the difference function $D(\boldsymbol{x})=Q_{\text{new}}(\boldsymbol{x})-Q_{\text{old}}(\boldsymbol{x})$. When replacing a point $\boldsymbol{x}_t \in \mathcal{X}_k$ with $\boldsymbol{x}_{\text{new}}$, $D(\boldsymbol{x})$ is obtained by solving:
\begin{equation}
\label{miniFrob}
\begin{aligned}
\underset{D}{\operatorname{\min}} \  &\left\Vert \nabla^2 D\right\Vert_F^2 \\ 
\text{s.t.} \ 
& D(\boldsymbol{x}_{i})=0 \ (i \neq t), \ \ D(\boldsymbol{x}_{\text{new}})=f(\boldsymbol{x}_{\text{new}})-Q_{\text{old}}(\boldsymbol{x}_{\text{new}}).
\end{aligned}
\end{equation}

\subsection{KKT System and Structure}
\vspace{0.185cm}
Based on the KKT conditions (with Lagrange multipliers $\lambda_j$), the quadratic function $D(\boldsymbol{x})$ can be written in the form $D(\boldsymbol{x})=c+\boldsymbol{x}^{\top} \boldsymbol{g}+\frac{1}{2} \sum_{j=1}^{m} \lambda_{j}(\boldsymbol{x}^{\top}\boldsymbol{x}_{j})^{2}$. After determining the parameters $\boldsymbol{\lambda}=(\lambda_1,\ldots,\lambda_m)^{\top} \in \mathbb{R}^m$, $c \in \mathbb{R}$ and $\boldsymbol{g} \in \mathbb{R}^{n}$, the system of linear equations holds as:
\begin{equation}
\label{BIG-(3.10)}
\begin{pmatrix}
\boldsymbol{A} & \boldsymbol{X}^{\top} \\
\boldsymbol{X} & \boldsymbol{0}
\end{pmatrix}
\begin{pmatrix}
\boldsymbol{\lambda}^{\top}, c, \boldsymbol{g}^{\top}
\end{pmatrix}^{\top}=
\begin{pmatrix}
\boldsymbol{r}^{\top}, 0,\ldots,0
\end{pmatrix}^{\top},
\end{equation}
where $\boldsymbol{0}\in\mathbb{R}^{(n+1)\times(n+1)}$. The elements of $\boldsymbol{A}$ and $\boldsymbol{X}$ are strictly given by $\boldsymbol{A}_{i j} =\frac{1}{2}\left(\boldsymbol{x}_{i}^{\top}\boldsymbol{x}_{j}\right)^{2}$ and $\boldsymbol{X} = (\boldsymbol{1}, \boldsymbol{x}_{1}, \ldots, \boldsymbol{x}_{m})^\top$. The residual vector has the component $r_i=f(\boldsymbol{x}_{i})-Q_{\text{old}}(\boldsymbol{x}_{i})$. We define the symmetric KKT matrix $\boldsymbol{W}\in\mathbb{R}^{p\times p}$ ($p=m+n+1$) and its inverse $\boldsymbol{H}$ as $\boldsymbol{W}=\left(\begin{smallmatrix} \boldsymbol{A} & \boldsymbol{X}^{\top} \\ \boldsymbol{X} & \boldsymbol{0} \end{smallmatrix}\right)$ and $\boldsymbol{H}=\boldsymbol{W}^{-1}$. The updating parameters are directly yielded by $(\boldsymbol{\lambda}^{\top},c,\boldsymbol{g}^{\top})^{\top} =\boldsymbol{H}(\boldsymbol{r}^{\top},0,\ldots,0)^{\top}$. A critical property of $\boldsymbol{W}$ is that its sub-blocks rely strictly on the inner products of coordinates. This algebraic structure ensures that any single-point replacement or coordinate flipping transforms $\boldsymbol{W}$ via low-rank perturbations, allowing $\boldsymbol{H}$ to be updated efficiently without $\mathcal{O}(m^3)$ matrix inversions.

\section{Low-Rank Updates of KKT System}
\vspace{0.185cm}
\label{sec:low_rank}

Solving the KKT system directly requires $\mathcal{O}(m^3)$ operations. However, there is a low-rank updating formula for the KKT inverse matrix $\boldsymbol{H}$ if the change in $\boldsymbol{W}$ occurs strictly on specific rows or columns. The KKT matrix $\boldsymbol{W}$ for minimizing the Frobenius norm is updated via a Rank-2 formula in two primary cases: when a single point is replaced, or when all interpolation points undergo a specific transformation.

\subsection{Single Point Update and Flipping}
\vspace{0.185cm}
When a single interpolation point $\boldsymbol{x}_t$ is updated to a new point in the feasible region, the changed row and column of $\boldsymbol{W}$ are strictly the $t$-th ones, according to the definition of $\boldsymbol{W}$. Because the perturbation $\Delta\boldsymbol{W}$ is symmetric and confined exclusively to a single row and column, it can be analytically decomposed into the sum of two outer products ($\boldsymbol{e}_t \boldsymbol{w}^\top + \boldsymbol{w} \boldsymbol{e}_t^\top$). This guarantees a standard Rank-2 updating scenario. A far more powerful transformation arises when we wish to update \textit{all} interpolation points simultaneously to explore a new direction.
\begin{theorem}
\begin{sloppypar}
Assume that every interpolation point $\boldsymbol{x}_1, \dots, \boldsymbol{x}_m$ is updated to $\boldsymbol{x}_i + \alpha_i \boldsymbol{d}$, where $\boldsymbol{d} \in \mathbb{R}^n$ is a direction vector and $\alpha_i$ are scalars. If $\boldsymbol{d} = \boldsymbol{e}_t$ (the $t$-th standard basis vector) and $\alpha_i = -2\boldsymbol{x}_i^{(t)}$, the geometry sub-matrix $\boldsymbol{A}$ remains completely unchanged, and the overall perturbation to the KKT matrix $\boldsymbol{W}$ is strictly of Rank-2.
\end{sloppypar}
\end{theorem}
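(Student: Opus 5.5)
The plan is to recognize the prescribed update as a coordinate reflection and exploit the fact that $\boldsymbol{W}$ depends on the points only through inner products (in $\boldsymbol{A}$) and through their coordinates (in $\boldsymbol{X}$). With $\boldsymbol{d}=\boldsymbol{e}_t$ and $\alpha_i=-2\boldsymbol{x}_i^{(t)}$, the new point is
\begin{equation*}
\boldsymbol{x}_i' = \boldsymbol{x}_i - 2\boldsymbol{x}_i^{(t)}\boldsymbol{e}_t = \boldsymbol{R}\boldsymbol{x}_i,
\qquad \boldsymbol{R}=\boldsymbol{I}-2\boldsymbol{e}_t\boldsymbol{e}_t^{\top}.
\end{equation*}
That is, the $t$-th coordinate of every point is flipped in sign. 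Since $\boldsymbol{R}$ is a Householder reflection, it is symmetric and orthogonal.

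\textbf{Step 1 ($\boldsymbol{A}$ is unchanged).} Orthogonality of $\boldsymbol{R}$ gives $(\boldsymbol{x}_i')^{\top}\boldsymbol{x}_j' = \boldsymbol{x}_i^{\top}\boldsymbol{R}^{\top}\boldsymbol{R}\boldsymbol{x}_j = \boldsymbol{x}_i^{\top}\boldsymbol{x}_j$. Hence $\boldsymbol{A}_{ij}'=\frac12((\boldsymbol{x}_i')^{\top}\boldsymbol{x}_j')^2=\boldsymbol{A}_{ij}$ for all $i,j$. This proves the first claim immediately.

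\textbf{Step 2 (only one row of $\boldsymbol{X}$ changes).} I will read $\boldsymbol{X}$ as the $(n+1)\times m$ block whose $i$-th column is $(1,\boldsymbol{x}_i^{\top})^{\top}$, which is the shape forced by the block structure of $\boldsymbol{W}$. Then $\boldsymbol{X}'=\mathrm{diag}(1,\boldsymbol{R})\,\boldsymbol{X}$. Because $\mathrm{diag}(1,\boldsymbol{R})$ differs from the identity only in the single diagonal entry corresponding to coordinate $t$, exactly one row of $\boldsymbol{X}$ changes: the row $(\boldsymbol{x}_1^{(t)},\dots,\boldsymbol{x}_m^{(t)})$ is negated. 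The constant row and all other coordinate rows are untouched.

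\textbf{Step 3 (rank-2 form of $\Delta\boldsymbol{W}$).} Let $s=m+1+t$ be the index of that row in $\boldsymbol{W}$. Define $\boldsymbol{w}\in\mathbb{R}^p$ by $\boldsymbol{w}_i=-2\boldsymbol{x}_i^{(t)}$ for $i\le m$ and $\boldsymbol{w}_i=0$ otherwise. By Steps 1 and 2, the perturbation is supported on row $s$ and column $s$, restricted to the first $m$ entries. The $(s,s)$ entry lies in the zero block and stays zero. Therefore
\begin{equation*}
\Delta\boldsymbol{W}=\boldsymbol{e}_s\boldsymbol{w}^{\top}+\boldsymbol{w}\boldsymbol{e}_s^{\top},
\end{equation*}
which is the same structure as in the single-point case described above.

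\textbf{Step 4 (the rank is exactly 2).} This sum of two outer products has rank at most $2$. If $\boldsymbol{w}\neq\boldsymbol{0}$, then $\boldsymbol{w}$ and $\boldsymbol{e}_s$ are linearly independent, because $\boldsymbol{w}_s=0$. In that case the rank is exactly $2$: restricted to $\mathrm{span}\{\boldsymbol{e}_s,\boldsymbol{w}\}$, the matrix is represented by the nonsingular matrix $\left(\begin{smallmatrix}0 & \|\boldsymbol{w}\|^2\\ 1 & 0\end{smallmatrix}\right)$. If all $\boldsymbol{x}_i^{(t)}=0$, then $\boldsymbol{w}=\boldsymbol{0}$ and the flip is the identity on the point set, so the perturbation vanishes. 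I will state this degenerate case explicitly and interpret ``rank-2'' as ``at most rank 2, and exactly 2 whenever the flip is nontrivial.''

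The main obstacle is not analytic but bookkeeping. One must fix the intended shape and orientation of $\boldsymbol{X}$, since the paper's displayed definition is ambiguous about transposition. One must then verify that the changed row of $\boldsymbol{X}$ and the corresponding column of $\boldsymbol{X}^{\top}$ line up at the same index $s$, so that the two pieces combine into the symmetric rank-2 form rather than a rank-4 perturbation.
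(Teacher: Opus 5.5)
Your proof is correct. Its second half coincides with the paper's: only the $t$-th coordinate row of $\boldsymbol{X}$ changes, so $\Delta\boldsymbol{W}$ is a cross at index $m+1+t$ that splits as $\boldsymbol{e}_k\boldsymbol{w}^{\top}+\boldsymbol{w}\boldsymbol{e}_k^{\top}$. The difference is in how you show that $\boldsymbol{A}$ is unchanged.

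The paper starts from a general update $\boldsymbol{x}_i+\alpha_i\boldsymbol{d}$ and expands $\hat{\boldsymbol{A}}_{ij}$. It uses that $\frac12(a+b)^2=\frac12a^2$ forces $b=0$ or $b=-2a$, and rules out the second branch on the diagonal with a negative discriminant. It then solves $\alpha_i^2+2\alpha_i\boldsymbol{x}_i^{(t)}=0$ to reach $\alpha_i=-2\boldsymbol{x}_i^{(t)}$. This is really a derivation: along $\boldsymbol{e}_t$, preserving the diagonal of $\boldsymbol{A}$ forces each $\alpha_i\in\{0,-2\boldsymbol{x}_i^{(t)}\}$. It motivates the flip, but the off-diagonal identity $4\boldsymbol{x}_i^{(t)}\boldsymbol{x}_j^{(t)}-2\boldsymbol{x}_i^{(t)}\boldsymbol{x}_j^{(t)}-2\boldsymbol{x}_i^{(t)}\boldsymbol{x}_j^{(t)}=0$ is only asserted implicitly.

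You instead recognize the update as the Householder reflection $\boldsymbol{I}-2\boldsymbol{e}_t\boldsymbol{e}_t^{\top}$. That proves the sufficiency direction the theorem actually states, for all $(i,j)$ at once. It also explains the phenomenon structurally: every orthogonal map preserves $\boldsymbol{A}$, and the coordinate flip is singled out because it alters just one row of $\boldsymbol{X}$.

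Your Step 4 also sharpens the paper, whose ``strictly of Rank-2'' is in effect only an upper bound. You show the rank is exactly $2$ when some $\boldsymbol{x}_i^{(t)}\neq0$ and that the perturbation vanishes otherwise. Your explicit $\boldsymbol{w}$ also shows that the paper's instruction to halve the $k$-th diagonal entry is vacuous, since $\Delta\boldsymbol{W}_{kk}=0$.
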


\begin{proof}
Let the updated coordinate matrix be $\widehat{\boldsymbol{X}} = [1, \dots, 1; \boldsymbol{x}_1 + \alpha_1 \boldsymbol{d}, \dots, \boldsymbol{x}_m + \alpha_m \boldsymbol{d}]^\top$. The updated elements of the geometry matrix $\boldsymbol{A}$ are:
\begin{equation}
\begin{aligned}
\hat{\boldsymbol{A}}_{i j} &=\frac{1}{2}\left((\boldsymbol{x}_i + \alpha_i \boldsymbol{d})^\top (\boldsymbol{x}_j + \alpha_j \boldsymbol{d})\right)^2 \\
&=\frac{1}{2}\left(\boldsymbol{x}_i^\top \boldsymbol{x}_j + \alpha_i \alpha_j + \alpha_j \boldsymbol{x}_i^\top \boldsymbol{d} + \alpha_i \boldsymbol{x}_j^\top \boldsymbol{d}\right)^2.
\end{aligned}
\end{equation}
To ensure that the quadratic geometry is preserved, we require $\hat{\boldsymbol{A}}_{ij} = \boldsymbol{A}_{ij} = \frac{1}{2}(\boldsymbol{x}_i^\top \boldsymbol{x}_j)^2$. This condition holds if and only if the inner terms satisfy $\alpha_i \alpha_j + \alpha_j \boldsymbol{x}_i^\top \boldsymbol{d} + \alpha_i \boldsymbol{x}_j^\top \boldsymbol{d} = 0$, or alternatively, $\alpha_i \alpha_j + \alpha_j \boldsymbol{x}_i^\top \boldsymbol{d} + \alpha_i \boldsymbol{x}_j^\top \boldsymbol{d} = -2\boldsymbol{x}_i^\top \boldsymbol{x}_j$. However, if $i=j$ and $\boldsymbol{d} = \boldsymbol{e}_t$ (where only the $t$-th component is 1), the second alternative implies $\alpha_i^2 + 2\alpha_i \boldsymbol{x}_i^{(t)} + 2\boldsymbol{x}_i^\top \boldsymbol{x}_i = 0$. Since the discriminant $4(\boldsymbol{x}_i^{(t)})^2 - 8(\boldsymbol{x}_i^\top \boldsymbol{x}_i) < 0$ for $\boldsymbol{x}_i \neq 0$, this equation has no real roots. Therefore, we can only obtain valid scalars $\alpha_i$ by solving the first condition. Substituting $\boldsymbol{d}=\boldsymbol{e}_t$ into the first condition when $i=j$ yields $\alpha_i^2 + 2\alpha_i \boldsymbol{x}_i^{(t)} = 0$. The non-trivial solution to this constraint is $\alpha_i = -2\boldsymbol{x}_i^{(t)}$ for all $i=1, \dots, m$. 

Substituting $\alpha_i = -2\boldsymbol{x}_i^{(t)}$ into the point update formula gives $\boldsymbol{x}_i - 2\boldsymbol{x}_i^{(t)} \boldsymbol{e}_t$. This operation strictly negates the $t$-th coordinate of every point $\boldsymbol{x}_i$, which we define as a \textit{$t$-axis flip}. Because $\alpha_i$ satisfies the required condition, the geometry block $\boldsymbol{A}$ remains completely unchanged. The perturbation $\Delta\boldsymbol{W} = \hat{\boldsymbol{W}} - \boldsymbol{W}$ occurs solely due to the updated coordinate block $\widehat{\boldsymbol{X}}$, which differs from $\boldsymbol{X}$ exactly in its $(t+1)$-th row (corresponding to the $t$-th spatial dimension). Thus, the non-zero elements of the symmetric matrix $\Delta\boldsymbol{W}$ are strictly confined to the $k$-th row and column, where $k = m+1+t$. Such a cross-shaped perturbation can be algebraically decomposed into a strict symmetric Rank-2 form:
\begin{equation}
\label{eq:rank2_decomp}
\Delta\boldsymbol{W} = \boldsymbol{e}_k \boldsymbol{w}^\top + \boldsymbol{w} \boldsymbol{e}_k^\top,
\end{equation}
where $\boldsymbol{e}_k$ is the $k$-th standard basis vector, and the vector $\boldsymbol{w}$ incorporates the $k$-th column of $\Delta\boldsymbol{W}$ (with its $k$-th diagonal element halved).
\end{proof}

\subsection{Rank-2 Updating Mechanism}
\vspace{0.185cm}
Given the explicit Rank-2 decomposition $\Delta\boldsymbol{W} = \boldsymbol{e}_k \boldsymbol{w}^\top + \boldsymbol{w} \boldsymbol{e}_k^\top$ established above, the new inverse matrix $\boldsymbol{H}_{\text{new}} = (\boldsymbol{W} + \Delta\boldsymbol{W})^{-1}$ can be analytically updated without $\mathcal{O}(m^3)$ refactorization. Dropping the subscript $k$ for brevity, the Sherman-Morrison-Woodbury identity yields an $\mathcal{O}(n^2)$ update formula:
\begin{equation}
\label{updating-formula}
\begin{aligned}
\boldsymbol{H}_{\text{new}}=&\boldsymbol{H}+\frac{1}{\sigma}\Big[\alpha\left(\boldsymbol{e}-\boldsymbol{H} \boldsymbol{w}\right)\left(\boldsymbol{e}-\boldsymbol{H} \boldsymbol{w}\right)^{\top}-\beta \boldsymbol{H} \boldsymbol{e} \boldsymbol{e}^{\top} \boldsymbol{H}\\
&+\tau\left\{\boldsymbol{H} \boldsymbol{e}\left(\boldsymbol{e}-\boldsymbol{H} \boldsymbol{w}\right)^{\top}+\left(\boldsymbol{e}-\boldsymbol{H} \boldsymbol{w}\right) \boldsymbol{e}^{\top} \boldsymbol{H}\right\}\Big],
\end{aligned}
\end{equation}
where $\alpha=\boldsymbol{e}^{\top} \boldsymbol{H} \boldsymbol{e}$, $\beta=(\boldsymbol{e}-\boldsymbol{H} \boldsymbol{w})^{\top} \boldsymbol{w}$, $\tau=\boldsymbol{e}^{\top} \boldsymbol{H} \boldsymbol{w}$, and $\sigma=\alpha \beta+\tau^{2}$.
\section{Algorithm}
\vspace{0.185cm}

Based on the flipping mechanism, we distribute the initial model to $P$ parallel machines. Each machine randomly flips a coordinate axis, instantly updates its local inverse $\mathbf{H}^{(i)}$ using Eq. \eqref{updating-formula}, and solves the trust-region subproblem via Truncated Conjugate Gradient (TCG). The framework is summarized in Algorithm \ref{alg:framework}. To intuitively illustrate the execution flow, Figure \ref{fig:algo_flowchart} provides a schematic overview of the parallel framework.

\begin{algorithm}[h!]
\caption{Parallel Trust-Region with Flipping
}
\label{alg:framework}
\small
\begin{algorithmic}[1]
\Require dimension $n$, machines $P$, base point $\boldsymbol{x}_{0}$, steps $S=10$
\State \textbf{Init:} $\mathcal{X} \gets 2n+1$ points around $\boldsymbol{x}_{0}$. $Q \gets 0$, $\boldsymbol{H} \gets \boldsymbol{W}_0^{-1}$, $\Delta \gets \Delta_{\text{init}}$.
\While{termination criteria not met}
    \For{machine $i=1,\dots,P$ \textbf{in parallel}}
        \State \textbf{[Flip]} $t_i \gets \text{rand}\{1..n\}$; \ $\hat{\mathcal{X}}^{(i)} \gets$ flip $t_i$-th axis of $\mathcal{X}$.
        \State \textbf{[Update]} Extract $t_i$-th coords into $\boldsymbol{w}$; \ $\boldsymbol{H}^{(i)} \gets \text{Rank2Up}(\boldsymbol{H}, t_i\!+\!m\!+\!1, \boldsymbol{w})$.
        \State Solve KKT for parameters; \ Update local model $Q^{(i)}$.
        
        \State \textbf{[Inner Loop]} $\boldsymbol{H}_s, Q_s \gets \boldsymbol{H}^{(i)}, Q^{(i)}$; \ $\Delta^{(i)} \gets \Delta$.
        \For{$s=1,\dots,S$}
            \State $\boldsymbol{x}_{\text{opt}} \gets \arg\min_{\boldsymbol{x} \in \hat{\mathcal{X}}^{(i)}} f(\boldsymbol{x})$.
            \State $\boldsymbol{x}_{\text{new}} \gets \boldsymbol{x}_{\text{opt}} + \text{TCG}(\nabla Q_s(\boldsymbol{x}_{\text{opt}}), \nabla^2 Q_s, \Delta^{(i)})$.
            \State $\rho \gets (f_{\text{old}} - f_{\text{new}}) / (Q_{\text{old}} - Q_{\text{new}})$.
            \State Update trust-region radius $\Delta^{(i)}$ based on $\rho$.
            \If{$\rho > 0$ \textnormal{(Step accepted)}}
                \State Replace worst point $t$ in $\hat{\mathcal{X}}^{(i)}$ with $\boldsymbol{x}_{\text{new}}$.
                \State Extract new coords into $\boldsymbol{w}$; \ $\boldsymbol{H}_s \gets \text{Rank2Up}(\boldsymbol{H}_s, t, \boldsymbol{w})$.
                \State Update local model $Q_s$ via new KKT residuals.
            \EndIf
        \EndFor
        \State Save local best $\boldsymbol{x}_{\text{opt}}^{(i)}$, and matrix state $\boldsymbol{H}^{(i)} \gets \boldsymbol{H}_s$.
    \EndFor
    \State \textbf{[Sync]} $\text{best} \gets \arg\min_i f(\boldsymbol{x}_{\text{opt}}^{(i)})$.
    \State Broadcast states and $\Delta$ of machine \text{best} as new global baseline.
\EndWhile
\end{algorithmic}
\end{algorithm}


\begin{figure}[htbp]
    \centering
    \resizebox{0.5\columnwidth}{!}{%
    \begin{tikzpicture}[
        node distance = 0.3cm, 
        box/.style = {rectangle, rounded corners, draw=black!70, thick, fill=white, drop shadow={opacity=0.15}},
        globalnode/.style = {box, fill=blue!10, text width=0.8\columnwidth, align=center, inner sep=3pt}, 
        workernode/.style = {box, fill=orange!10, text width=0.8\columnwidth, align=left, inner sep=3pt},
        syncnode/.style = {box, fill=gray!15, text width=0.8\columnwidth, align=center, inner sep=3pt},
        arrow/.style = {thick, -{Latex[scale=1.0]}}
    ]

    \node (global) [globalnode] {
        \textbf{Global State $k$} \\
        Base: $\mathbf{x}_{\text{best}}^{(k)}$, Inv: $\mathbf{H}^{(k)}$, Rad: $\Delta^{(k)}$
    };

    \node (w_back2) [workernode, below=0.5cm of global, xshift=3pt, yshift=-3pt] {};
    \node (w_back1) [workernode, below=0.5cm of global, xshift=1.5pt, yshift=-1.5pt] {};
    
    \node (worker) [workernode, below=0.5cm of global] {
        \centering \textbf{Parallel Execution ($P$ Machines)} \\[1mm]
        \textbf{1. Flip:} Select $t_i \sim \mathcal{U}\{1..n\}$ and flip axis. \\
        \textbf{2. Update:} $\mathcal{O}(n^2)$ Rank-2 KKT update for $\boldsymbol{H}^{(i)}$. \\
        \textbf{3. Search:} Inner TR search via TCG solver.
    };

    \node (sync) [syncnode, below=0.5cm of worker] {
        \textbf{Global Synchronization} \\
        Select best state: $i^* = \arg\min_i f(\boldsymbol{x}_{\text{opt}}^{(i)})$
    };

    \draw [arrow] (global) -- node[right, font=\footnotesize] {Broadcast} (worker);
    \draw [arrow] (worker) -- node[right, font=\footnotesize] {Gather} (sync);
    
    \draw [arrow, rounded corners=6pt, color=red!70!black] 
        (sync.west) -- ++(-0.3,0) |- node[near start, left, font=\bfseries\footnotesize, text=red!70!black, align=center, text width=1.5cm] {Update\\Global\\Baseline} (global.west);

    \end{tikzpicture}%
    } 
    
    \vspace{-2mm}
    \caption{Schematic overview of the parallel flipping trust-region framework. }
    \label{fig:algo_flowchart}
\end{figure}

\section{Numerical Experiments}
\label{sec:experiments}
\vspace{0.185cm}
To evaluate the parallel flipping mechanism, we conduct experiments on the classic derivative-free benchmark set by Mor\'{e} and Wild \cite{JJMSMW09}. The set comprises 53 unique problems ($n \in [2, 12]$) with 10 variations (e.g., smooth, non-differentiable, and noisy with variance $10^{-4}$), totaling 530 instances. We aggregate the results using data profiles $\delta_a(\beta)$ and performance profiles $\rho_a(\alpha)$ \cite{EDD01, JJMSMW09}.

An algorithm successfully solves an instance if the normalized reduction satisfies:
\begin{equation}
\label{eq:f_acc}
f_{\mathrm{acc}}^{N} := \frac{f(\boldsymbol{x}_{0}) - f(\boldsymbol{x}_{N})}{f(\boldsymbol{x}_{0}) - f(\boldsymbol{x}_{\text{best}})} \ge 1-\tau,
\end{equation}
where $\boldsymbol{x}_{N}$ is the best value found after $N$ evaluations, $\boldsymbol{x}_{0}$ is the starting point, and $\boldsymbol{x}_{\text{best}}$ is the known optimum. We evaluate four tolerance levels: $\tau \in \{10^{-1}, 10^{-2}, 10^{-3}, 10^{-4}\}$.  
We test our algorithm with varying parallel machines $P \in \{2, 4, 8\}$ against two baseline solvers: \algo{Large-Scale} and \algo{QARSTA}. The initial trust-region radius is $\Delta_0 = \max \{1, \|\boldsymbol{x}_{0}\|_{\infty}\}$, and updating parameters are set to $\gamma=2, \eta_1=0.25, \eta_2=0.75$.

Executed on Windows with an Intel Core Ultra 9 285H CPU and 32GB RAM. 
The results presented in Figures~\ref{fig:data_profiles} 
provide a comprehensive comparison of the tested methods across a wide range of accuracy levels. The data profiles in Figure~\ref{fig:data_profiles} illustrate the fraction of problems solved within a given normalized computational budget. We observe that our proposed method consistently achieves a higher success rate across all tolerance levels, especially in the low-budget regime. 



\begin{figure}[t!]
    \centering
    \includegraphics[width=0.6\columnwidth, trim=0 0 0 0, clip]{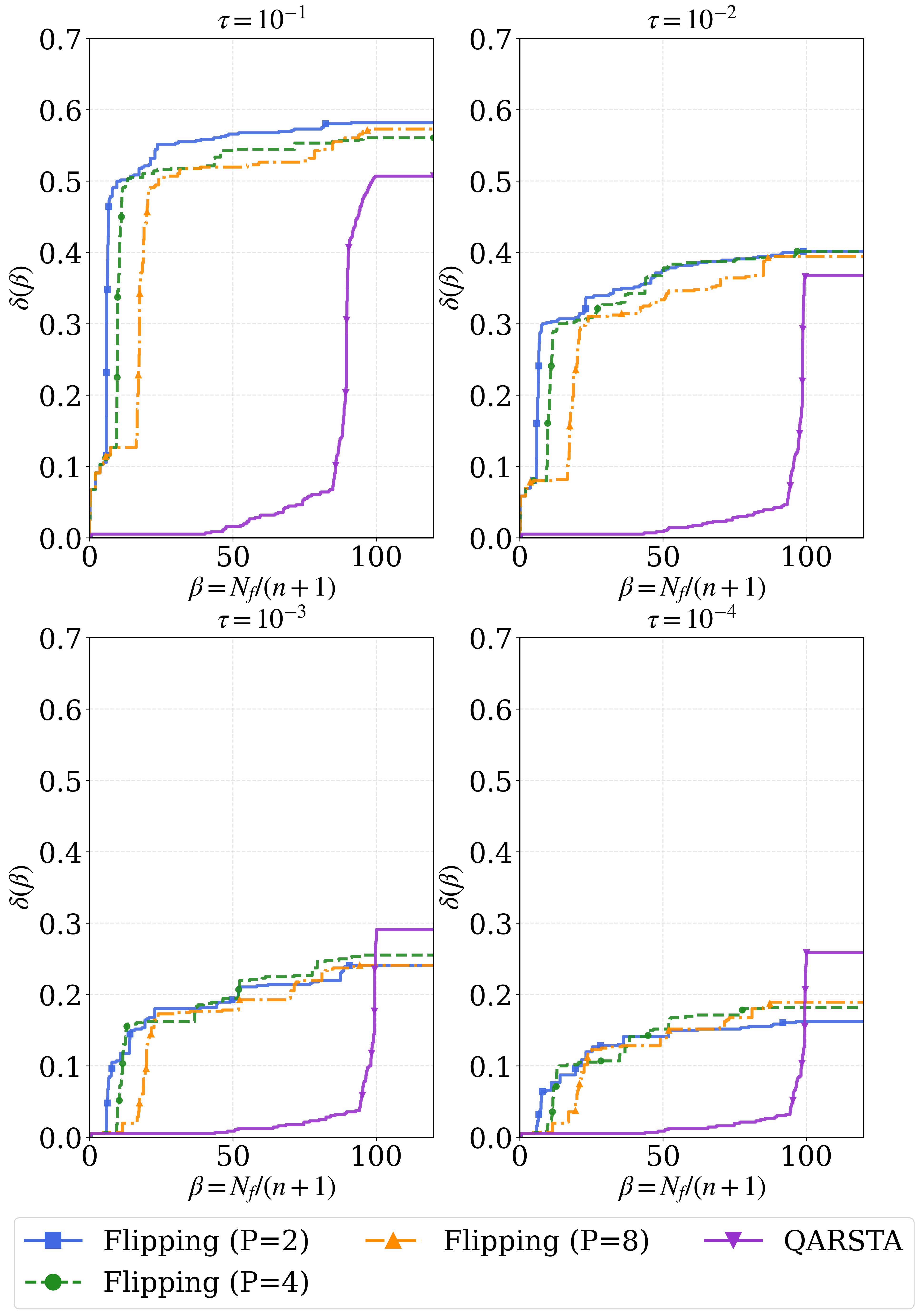}
    \vspace{-4mm} 
    \caption{Data profiles $\delta(\beta)$ for the 530 test problems under varying accuracy tolerance levels $\tau$. The x-axis indicates the normalized budget $\beta = N_f/(n+1)$.}
    \label{fig:data_profiles}
\end{figure}



\section{Conclusion}
\vspace{0.185cm}
We developed a new parallel framework for model-based derivative-free optimization by combining least Frobenius norm updating models, structured low-rank KKT updates, and a flipping-based interpolation mechanism. The central contribution of this work is to show that interpolation-set modifications in the least Frobenius norm setting are not merely implementation details, but possess exploitable algebraic structure that can be used to substantially reduce model-maintenance cost.




\bibliographystyle{acm}

\begingroup
\vspace{1em}   
\small
\bibliography{bigrefs,xpub}
\endgroup



\end{document}